\newtheorem{theorem}{Theorem}
\newtheorem{lemma}{Lemma}
\newtheorem{remark}{Remark}
\newtheorem{corollary}{Corollary}
\begin{document}
\author{David Baramidze$^{1,2}$,  Lars-Erik Persson$^{2,3 ^{\star}}$, Kristoffer Tangrand$^{2}$, George Tephnadze$^{1}$}

\title[Nörlund means \dots]{$(H_p-L_p)$ type inequalities for subsequences of Nörlund means of Walsh-Fourier series}
\date{}
\maketitle

$^{1}$The University of Georgia, School of Science and Technology, 77a Merab Kostava St, Tbilisi 0128, Georgia.

$^{2}$Department of Computer Science and Computational Engineering,  UiT The Arctic University of Norway, P.O. Box 385, N-8505, Narvik, Norway.

$^{3}$Department of Mathematics and Computer Science, Karlstad University, 65188 Karlstad, Sweden.

$^{\star}$Corresponding author: Kristoffer Tangrand  (email: ktangrand@gmail.com)
\begin{abstract}
We investigate the subsequence $\{t_{2^n}f \}$ of N\"{o}rlund means with respect to the Walsh
system generated by non-increasing and convex sequences. In particular, we prove that a big class of such summability methods are not bounded  from the martingale Hardy spaces $H_p$ to the  space $weak-L_p $ for $0<p<1/(1+\alpha) $, where $0<\alpha<1$. Moreover, some new related inequalities are derived. As application, some well-known and new results are pointed out for well-known summability methods, especially for N\"{o}rlund logarithmic means and Ces\`aro means.
\end{abstract}

\date{}

\textbf{2000 Mathematics Subject Classification.} 26015, 42C10, 42B30.

\textbf{Key words and phrases:} Walsh system, Nörlund means, Ces\`aro means, Nörlund logarithmic means, martingale Hardy space, convergence, divergence, inequalities.

\section{Introduction}

The terminology and notations used in this introduction can be found in Section 2. 

The fact that the Walsh system is the group of characters of a compact abelian group connects Walsh analysis with abstract harmonic analysis was discovered independently by Fine \cite{fi} and Vilenkin \cite{Vi}. For general references to the Haar measure and harmonic analysis  on groups   see Pontryagin \cite{Pontryagin}, Rudin \cite{Rudin}, and Hewitt and Ross \cite{HR}. In particular, Fine investigated the group $G$, which is a direct product of the additive groups $Z_{2}=:\{0,1\}$  and introduced the Walsh system $\{{w}_j\}_{j=0}^{\infty}$. 

It is well-known that Walsh systems do not form bases in the
space $L_{1}.$ Moreover, there is a function in the Hardy space $H_{1},$
such that the partial sums of $f$ are not bounded in the $L_{1}$-norm. Moreover, (see \cite{tep7}) there
exists a martingale $f\in H_{p}\left( 0<p<1\right),$ such that
$$
\underset{n\in \mathbb{N}}{\sup }\left\Vert S_{2^n+1}f\right\Vert
_{weak-L_{p}}=\infty.
$$
On the other hand, (for details see e.g. the books \ \cite{sws}
and \cite{We1} and especially the newest one \cite{PTWbook}) the subsequence $\{S_{2^n}\}$ of partial
sums is bounded from the martingale Hardy space $H_{p}$ to the
space $H_{p},$ for all $p>0,$ that is the following inequality holds:
\begin{equation}\label{snjemala}
\left\Vert S_{2^n}f\right\Vert _{H_p}\leq c_{p}\left\Vert f\right\Vert _{H_p}, \ \ n\in \mathbb{N}, \ \ p>0.
\end{equation}

Weisz \cite{We3} proved that Fej\'er means of
Vilenkin-Fourier series are bounded from the martingale Hardy space $H_{p}$ to the
space $H_{p},$ for  $p>1/2.$ Goginava \cite{gog1} (see also \cite{PTT}, \cite{NT1,NT2,NT3,NT4}) proved that there exists a
martingale $f\in H_{1/2}$ such that
$$
\sup_{n\in \mathbb{N}}\left\Vert \sigma _{n}f\right\Vert _{1/2}=+\infty .
$$
However, Weisz \cite{We3} (see also \cite{pt}) proved that for every $ f\in H_p, $ there exists an absolute constant $ c_p, $ such that the following inequality holds:
\begin{equation} \label{sigmanjemala}
\left\Vert \sigma_{2^n}f\right\Vert _{H_p}\leq c_{p}\left\Vert f\right\Vert _{H_p}, \ \ n\in \mathbb{N}, \ \ p>0.
\end{equation}

M\'oricz and Siddiqi \cite{Mor} investigated the approximation properties of some special N\"orlund means of Walsh-Fourier series of $L_{p}$ functions in
norm. Approximation properties for general summability methods can be found in \cite{BN,BNT}. Fridli, Manchanda and Siddiqi \cite{FMS} improved and extended the results of M\'oricz and Siddiqi \cite{Mor} to martingale Hardy spaces.
The case when $\left\{ q_{k}=1/k:k\in \mathbb{N}\right\} $ was
excluded, since the methods are not applicable to N\"o%
rlund logarithmic means. In \cite{Ga2} G\'{a}t and Goginava proved some
convergence and divergence properties of the N\"orlund logarithmic means of
functions in the Lebesgue space $%
L_1.$ In particular, they proved that there exists a function $f$ in the space $ L_1, $ such that 
$$
\sup_{n\in \mathbb{N}}\left\Vert L_{n}f\right\Vert _{1}=\infty .
$$
In \cite{BPT} (see also \cite{PTW}) it was proved that there exists a martingale $f\in H_{p}, \ \ (0<p< 1)$ such that
$$
\sup_{n\in \mathbb{N}}\left\| L_{2^n}f\right\| _{p}=\infty .
$$
A counterexample for $p=1$ was proved in \cite{PTW2}. 
However, Goginava \cite{gog2} proved that  for every $ f\in H_1, $ there exists an absolute constant $ c, $ such that the following inequality holds:
\begin{equation} \label{gogjemala}
\left\Vert L_{2^n}f\right\Vert _{1}\leq c\left\Vert f\right\Vert _{H_1}, \ \ n\in \mathbb{N}.
\end{equation}

In  \cite{BPST} it was proved that for any $0<p<1,$ there exists a martingale $f\in H_{p}$ such that 
$$
\sup_{n\in \mathbb{N}} \left\Vert L_{2^n}f\right\Vert_{weak-L_p}=\infty.
$$

In \cite{PTW1} is was proved that for any  non-decreasing sequence $(q_k,k\in \mathbb{N})$ satisfying the conditions 
\begin{equation} \label{6a}
\frac{1}{Q_n}=O\left(\frac{1}{n^{\alpha}}\right) \text{ \ \ \ and \ \ \ }
q_n-q_{n+1}=O\left(\frac{1}{n^{2-\alpha}}\right) ,\text{ \  as \  }
n\rightarrow \infty,
\end{equation}
then, for every $ f\in H_p, $ where $p>1/(1+\alpha),$ there exists an absolute constant $ c_p, $ depending only on $p,$ such that the following inequality holds:
\begin{equation} \label{weaktypenorlund}
\left\Vert t_{n}f\right\Vert _{H_p}\leq c_{p}\left\Vert f\right\Vert _{H_p}, \ \ n\in \mathbb{N}.
\end{equation}
Boundedness does not hold from $H_{p}$ to $weak-L_{p},$ for $0<p< 1/ (1+\alpha).$ 
As a consequence, (for details see \cite{we6}) we get that the Ces\`aro means $\sigma_n^{\alpha}$ is bounded from $H_{p}$ to $L_{p},$ for  $p>1/(1+\alpha),$ but they are not bounded from $H_{p}$ to $weak-L_{p},$ for $0<p< 1/ (1+\alpha).$ In the endpoint case $p=1/ (1+\alpha),$ Weisz and Simon \cite{sw} proved that the maximal operator $\sigma ^{\alpha ,\ast }$  of Ces\`aro means define by
$$\sigma ^{\alpha ,\ast }f:=\sup_{n\in\mathbb{N}}\vert\sigma ^{\alpha}_nf\vert$$
 is bounded
from the Hardy space $H_{1/\left(1+\alpha \right) }$ to the space $weak-L_{1/\left(1+\alpha \right)}.$ Goginava \cite{gog4} gave a counterexample,
which shows that boundedness does not hold for $0<p\leq 1/\left(1+\alpha
\right) .$

In this paper we  develop some methods considered in \cite{BPTW,BPST,LPTT} (see also the new book \cite{PTWbook}) and prove that for any $0<p<1,$ there exists a martingale $f\in H_{p}$ such that 
$$
\sup_{n\in \mathbb{N}} \left\Vert t_{2^n}f\right\Vert_{weak-L_p}=\infty.
$$
Moreover, we prove that a big class of subsequence $\{t_{2^n}f \}$ of Nörlund means with respect to the Walsh
system generated by non-increasing and convex sequences  are not bounded  from the martingale Hardy spaces $H_p$ to the  space $weak-L_p $ for $0<p<1/(1+\alpha) $, where $0<\alpha<1$. Moreover, some new related inequalities are derived. As application, some well-known and new results are pointed out for well-known summability methods, especially for Nörlund logarithmic means and Ces\`aro means.

The main results in this paper are presented and proved in Section 4. Section 3 is used to present some auxiliary results, where, in particular, Lemma 2 is new and of independent interest. In order not to disturb our discussions later on some definitions and notations are given in Section 2. 

\section{Definitions and Notations}

\bigskip Let $\mathbb{N}_{+}$ denote the set of the positive integers, $%
\mathbb{N}:=\mathbb{N}_{+}\cup \{0\}.$ Denote by $Z_{2}$ the discrete cyclic
group of order 2, that is $Z_{2}:=\{0,1\},$ where the group operation is the
modulo 2 addition and every subset is open. The Haar measure on $Z_2$ is
given so that the measure of a singleton is 1/2.

Define the group $G$ as the complete direct product of the group $Z_{2},$
with the product of the discrete topologies of $Z_{2}$`s. 

The elements of $G$
are represented by sequences 
$$x:=(x_{0},x_{1},...,x_{j},...), \ \ \ \text{ where } \ \ \ 
x_{k}=0\vee 1.$$

It is easy to give a base for the neighborhood of $x\in G$ namely:
\begin{equation*}
I_{0}\left( x\right) :=G,\text{ \ }I_{n}(x):=\{y\in
G:y_{0}=x_{0},...,y_{n-1}=x_{n-1}\}\text{ }(n\in \mathbb{N}).
\end{equation*}

Denote $I_{n}:=I_{n}\left( 0\right) ,$ $\overline{I_{n}}:=G$ $\backslash $ $%
I_{n}$ and 
$$e_{n}:=\left( 0,...,0,x_{n}=1,0,...\right) \in G, \ \ \text{ for } n\in 
\mathbb{N}.$$

If $n\in \mathbb{N},$ then every $n$ can be uniquely expressed as 
$n=\sum_{k=0}^{\infty }n_{j}2^{j},$  where $ n_{j}\in Z_{2} \   (j\in \mathbb{N}) $
and only a finite numbers of $n_{j}$ differ from zero. Let 
$$\left\vert n\right\vert :=\max \{k\in \mathbb{N}:\ n_{k}\neq 0\}.$$

The norms (or quasi-norms) of the spaces $L_{p}(G)$ and $weak-L_{p}\left(
G\right) ,$ $\left( 0<p<\infty \right) $ are, respectively, defined by 
\begin{equation*}
\left\Vert f\right\Vert _{p}^{p}:=\int_{G}\left\vert f\right\vert ^{p}d\mu \ \ \ \text{
and}
\ \ \ 
\left\Vert f\right\Vert_{weak-L_{p}}^{p}:=\sup_{\lambda
	>0}\lambda ^{p}\mu \left( f>\lambda \right) .
\end{equation*}

The $k$-th Rademacher function is defined by
\begin{equation*}
r_{k}\left( x\right) :=\left( -1\right) ^{x_{k}}\text{\qquad }\left( \text{ }%
x\in G,\text{ }k\in \mathbb{N}\right) .
\end{equation*}

Now, define the Walsh system $w:=(w_{n}:n\in \mathbb{N})$ on $G$ as: 
\begin{equation*}
w_{n}(x):=\overset{\infty }{\underset{k=0}{\Pi }}r_{k}^{n_{k}}\left(
x\right) =r_{\left\vert n\right\vert }\left( x\right) \left( -1\right) ^{
\underset{k=0}{\overset{\left\vert n\right\vert -1}{\sum }}n_{k}x_{k}}\text{
\qquad }\left( n\in \mathbb{N}\right) .
\end{equation*}

It is well-known that  (see e.g. \cite{sws}) the Walsh system is orthonormal and complete in $L_{2}\left( G\right) .$ Moreover, for any $n\in\mathbb{N},$
\begin{eqnarray}\label{vilenkin}
w_n\left( x+y\right) &=&w_n\left( x\right)w_n\left( y\right).
\end{eqnarray}

If $f\in L_{1}\left( G\right) $ we define the Fourier coefficients, partial sums and  Dirichlet kernel by
\begin{eqnarray*}
\widehat{f}\left( k\right) &:=&\int_{G}fw_{k}d\mu \,\,\,\,\left( k\in \mathbb{N
}\right) ,\\
S_{n}f&:=&\sum_{k=0}^{n-1}\widehat{f}\left( k\right) w_{k},\ \ \ 
D_{n}:=\sum_{k=0}^{n-1}w_{k\text{ }}\,\,\,\left( n\in \mathbb{N}_{+}\right).
\end{eqnarray*}

Recall that (for details see e.g. \cite{sws}):
\begin{equation}
D_{2^{n}}\left( x\right) =\left\{ 
\begin{array}{ll}
2^{n}, & \,\text{if\thinspace \thinspace \thinspace }x\in I_{n} \\ 
0, & \text{if}\,\,x\notin I_{n}%
\end{array}%
\right.  \label{1dn}
\end{equation}%
and
\begin{equation}
D_{n}=w_{n}\overset{\infty }{\underset{k=0}{\sum }}n_{k}r_{k}D_{2^{k}}=w_{n}%
\overset{\infty }{\underset{k=0}{\sum }}n_{k}\left(
D_{2^{k+1}}-D_{2^{k}}\right),\text{ for  }n=\overset{\infty }{\underset{i=0%
}{\sum }}n_{i}2^{i}.  \label{2dn}
\end{equation}

Let $\left\{ q_{k}, \ k\geq 0\right\} $ be a sequence of nonnegative numbers.
The Nörlund means for the Fourier series of $f$ are defined by 
\begin{equation*}
t_nf:=\frac{1}{Q_n}\sum_{k=1}^{n}q_{n-k}S_{k}f, \ \ \ \text{where} \ \ \ 
Q_{n}:=\sum_{k=0}^{n-1}q_k. 
\end{equation*}

In this paper we consider convex  $\left\{ q_{k}, \ k\geq 0\right\} $ sequences, that is 
$$q_{n-1}+q_{n+1}-2q_n\geq 0, \ \ \ \text{for all} \ \ \ n\in \mathbb{N }.$$

If the function $\psi(x)$ is any real valued and  convex function (for example $\psi(x)=x^{\alpha-1}, \ 0\leq \alpha\leq 1$), then the sequence  $\{\psi(n), \ n\in\mathbb{N }\}$ is convex. 

Since 
$q_{n-2}-q_{n-1}\geq q_{n-1}-q_{n}\geq q_{n}-q_{n+1}\geq q_{n+1}-q_{n+2}$
we find that $$q_{n-2}+q_{n+2}\geq q_{n-1}+q_{n+1}$$
and we also get that
\begin{equation}\label{11}
q_{n-2}+q_{n+2}-2q_n\geq 0, \ \ \ \text{for all} \ \ \ n\in \mathbb{N }.
\end{equation}

In the special case when $\{q_{k}=1, \ k\in \mathbb{N}\},$ we have the Fej\'er means
\begin{equation*}
\sigma _{n}f:=\frac{1}{n}\sum_{k=1}^{n}S_{k}f.
\end{equation*}
Moreover, if $q_{k}={1}/{(k+1)}$, then we get the Nörlund logarithmic means: 
\begin{equation}\label{L_n}
L_{n}f:=\frac{1}{l_{n}}\sum_{k=1}^{n}\frac{S_{k}f}{n+1-k}, \ \ \   
\ \ \ \text{where} \ \ \ l_{n}:=\sum_{k=1}^{n}\frac{1}{k}. 
\end{equation}

The Ces\`aro means $\sigma_n^{\alpha}$  (sometimes also denoted $\left(C,\alpha\right)$) is also well-known example of Nörlund means defined by
\begin{equation*}
\sigma_n^{\alpha}f=:\frac{1}{A_n^{\alpha}}\overset{n}{\underset{k=1}{\sum}}A_{n-k}^{\alpha-1}S_kf,
\end{equation*} 
where 
\begin{equation*}
A_0^{\alpha}:=0,\qquad A_n^{\alpha}:=\frac{\left(\alpha+1\right)\ldots\left(\alpha+n\right)}{n!},\qquad \alpha \neq -1,-2,\ldots
\end{equation*}
It is well-known that  
\begin{equation} \label{node0}
A_n^{\alpha}=\overset{n}{\underset{k=0}{\sum}}A_{n-k}^{\alpha-1}, \ \ \  A_n^{\alpha}-A_{n-1}^{\alpha}=A_n^{\alpha-1}\ \ \ \text{and} \ \ \
A_{n}^{\alpha }\sim n^{\alpha }.
\end{equation}

We also define $U_n^{\alpha}$ means as
\begin{equation*}
U^\alpha_nf:=\frac{1}{Q_n}\overset{n}{\underset{k=1}{\sum}}{(n+1-k)}^{(\alpha-1)} S_kf
\ \ \ \text{where} \ \ \ Q_{n}:=\sum_{k=1}^{n}k^{\alpha-1}. 
\end{equation*}

Let us also define $V_n^{\alpha}$ means as
\begin{equation*}
V_nf:=\frac{1}{Q_n}\overset{n}{\underset{k=1}{\sum}}{\ln(n+1-k)}S_kf
\ \ \ \text{where} \ \ \ Q_{n}:=\sum_{k=1}^{n}\frac{1}{\ln (k+1)}. 
\end{equation*}

%Riesz logarithmic means  are defined by 
%\begin{equation*}
%R_{n}f:=\frac{1}{l_{n}}\sum_{k=1}^{n}\frac{S_{k}f}{k}, \ \ l_{n}:=\sum_{k=1}^{n}%
%\frac{1}{k},
%\end{equation*}%
%We note that it is an  inverse of the Nörlund logarithmic means.

Let $f:=\left( f^{\left( n\right) },n\in \mathbb{N}%
\right) $ be a martingale with respect to $\digamma _{n}\left( n\in \mathbb{N%
}\right), $  which are generated by the intervals $\left\{ I_{n}\left(
x\right) :x\in G\right\}$ (for details see e.g. \cite{We1}).

We say that this martingale belongs to the Hardy martingale spaces $H_{p}\left( G\right), $ where $0<p<\infty, $ if
$$
\left\| f\right\| _{H_{p}}:=\left\| f^{*}\right\|_{p}<\infty, \ \ \ 
\text{with} \ \ \
f^{\ast }:=\sup_{n\in \mathbb{N}}\left\vert f^{(n)}\right\vert .
$$

In the case $f\in L_{1}\left(G\right),$ the maximal functions are also 
given by 
\begin{equation*}
M(f)\left( x\right) :=\sup\limits_{n\in \mathbb{N}}\left( \frac{1}{\mu
	\left( I_{n}\left( x\right) \right) }\left\vert \int_{I_{n}\left( x\right)
}f\left( u\right) d\mu \left( u\right) \right\vert \right) .
\end{equation*}

If $f\in L_{1}\left( G\right),$ then it is easy to show that the sequence $F=
\left( S_{2^{n}}f :n\in \mathbb{N}\right) $ is a martingale and $F^*=M(f).$

If $f=\left( f^{\left( n\right) },\text{ }n\in \mathbb{N}\right) $ is a
martingale, then the Walsh-Fourier coefficients must be defined in a slightly
different manner:
\begin{equation*}
\widehat{f}\left( i\right) :=\lim_{k\rightarrow \infty }\int_{G}f^{\left(
k\right) }\left( x\right)w_{i}\left( x\right) d\mu \left( x\right) .
\end{equation*}

A bounded measurable function $a$ is $p$-atom, if there exists an interval $I,$
such that
\begin{equation*}
\text{ supp}\left( a\right) \subset I, \ \ \ \int_{I}ad\mu =0\text{ \ and \ }\left\Vert a\right\Vert _{\infty }\leq \mu \left(
I\right) ^{-1/p}.
\end{equation*}

\section{Auxiliary Results}

The Hardy martingale space $H_{p}\left( G\right) $ has an atomic
characterization (see Weisz \cite{We1}, \cite{We3}):
\begin{lemma} \label{Weisz}
A martingale $f=\left( f^{\left( n\right) },\ n\in \mathbb{N}
\right) $ is in $H_{p}\left( 0<p\leq 1\right) $ if and only if there exist a
sequence $\left( a_{k},k\in \mathbb{N}\right) $ of p-atoms
and a sequence $
\left( \mu _{k},k\in\mathbb{N}\right) $ of real numbers such that for
every $n\in \mathbb{N}:$
\begin{equation}\label{1}
\underset{k=0}{\overset{\infty }{\sum }}\mu _{k}S_{2^{n}}a_{k}=f^{\left(
n\right) },   \ \ \ \ \text{where} \ \ \ \ 
\sum_{k=0}^{\infty }\left| \mu _{k}\right| ^{p}<\infty . 
\end{equation}
Moreover, the following two-sided inequality holds
$$
\left\| f\right\| _{H_{p}}\backsim \inf \left( \sum_{k=0}^{\infty }\left|
\mu _{k}\right| ^{p}\right) ^{1/p},
$$
where the infimum is taken over all decompositions of $f$ of the form (\ref{1}).
\end{lemma}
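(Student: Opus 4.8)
The statement is an equivalence together with a two-sided norm comparison, so the plan is to prove the two implications separately and then read off the comparison from the constants produced along the way.

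I would first dispose of the sufficiency: if $f$ admits a representation as in \eqref{1} with $\sum_k|\mu_k|^p<\infty$, then $f\in H_p$ and $\|f\|_{H_p}\le c\,(\sum_k|\mu_k|^p)^{1/p}$. The crux is the behaviour of $S_{2^n}$ on a single $p$-atom $a$ supported on a dyadic interval $I=I_N(x)$. Recall that $S_{2^n}a(y)=\frac{1}{\mu(I_n(y))}\int_{I_n(y)}a\,d\mu$ is the local average of $a$, a consequence of \eqref{1dn}. For $n\le N$ the interval $I$ lies inside a single $\digamma_n$-atom on which $a$ integrates to zero, hence $S_{2^n}a=0$; for $n>N$ only the $\digamma_n$-atoms contained in $I$ contribute, so $S_{2^n}a$ is supported in $I$ with $|S_{2^n}a|\le\|a\|_\infty\le\mu(I)^{-1/p}$. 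Consequently the maximal function obeys $\sup_n|S_{2^n}a|\le\mu(I)^{-1/p}\chi_I$ and $\|\sup_n|S_{2^n}a|\|_p^p\le1$. Since $0<p\le1$ the functional $\|\cdot\|_p^p$ is subadditive, so from $f^*\le\sum_k|\mu_k|\sup_n|S_{2^n}a_k|$ we obtain $\|f^*\|_p^p\le\sum_k|\mu_k|^p$, and taking the infimum over all decompositions gives one half of the comparison.

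The genuinely constructive part is the necessity: from $f\in H_p$ I must build atoms. For $k\in\mathbb{Z}$ I would introduce the stopping times $\tau_k:=\inf\{n\in\mathbb{N}:|f^{(n)}|>2^k\}$, which are well defined because each level set $\{|f^{(n)}|>2^k\}$ is a union of intervals $I_n(x)$, hence $\digamma_n$-measurable; note that $\{\tau_k<\infty\}=\{f^*>2^k\}$. The telescoping identity $f^{(n)}=\sum_k\big(f^{(\tau_{k+1}\wedge n)}-f^{(\tau_k\wedge n)}\big)$ realises $f$ as a sum of stopped martingale differences. Setting $\mu_k:=c\,2^k\mu(\{f^*>2^k\})^{1/p}$ and $a_k:=\mu_k^{-1}\big(f^{(\tau_{k+1}\wedge\,\cdot\,)}-f^{(\tau_k\wedge\,\cdot\,)}\big)$, one checks that $a_k$ is supported on $\{\tau_k<\infty\}$, has the required vanishing integral over each interval of that set (a consequence of the martingale property), and is bounded by a multiple of $2^k\mu(\{f^*>2^k\})^{-1/p}$, i.e. of $\mu(\mathrm{supp}\,a_k)^{-1/p}$. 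Finally the dyadic layer-cake estimate $\sum_k|\mu_k|^p\sim\sum_k 2^{kp}\mu(\{f^*>2^k\})\sim\int_G(f^*)^p\,d\mu=\|f\|_{H_p}^p$ furnishes the reverse inequality (a relabelling turns the $\mathbb{Z}$-indexed family into an $\mathbb{N}$-indexed one).

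The main obstacle lies entirely in the necessity direction, in verifying that the $a_k$ are bona fide $p$-atoms. Two points require care. First, $\{\tau_k<\infty\}$ is in general a finite or countable union of dyadic intervals rather than a single interval, so one must either adopt the equivalent notion of atoms supported on such unions or split each $a_k$ further into honest atoms on the maximal intervals composing $\{\tau_k=n\}$, all while keeping the total $\ell^p$-mass of the coefficients under control; this bookkeeping is the delicate step. Second, the uniform bound $\|a_k\|_\infty\lesssim\mu(\mathrm{supp}\,a_k)^{-1/p}$ hinges on controlling, at the stopping time $\tau_{k+1}$, the overshoot of $|f^{(\tau_{k+1})}|$ beyond the threshold $2^{k+1}$. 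This is exactly where the regular, binary structure of the filtration $\digamma_n$ enters: each interval $I_n(x)$ splits into two halves of equal measure, so a single averaging step can inflate the relevant conditional averages only by a bounded factor; the overshoot is therefore bounded by a fixed multiple of $2^{k+1}$ and the $L_\infty$-estimate closes.
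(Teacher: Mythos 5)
First, a point of reference: the paper itself does not prove this lemma at all; it is imported from Weisz \cite{We1}, \cite{We3} as a known atomic characterization, so your attempt must be judged against the standard literature proof. Your sufficiency half is correct and is essentially that standard argument: since $S_{2^n}a(y)=2^n\int_{I_n(y)}a\,d\mu$, a $p$-atom $a$ supported on $I=I_N(x_0)$ satisfies $S_{2^n}a=0$ for $n\le N$ (by the cancellation $\int_I a\,d\mu=0$) and $\vert S_{2^n}a\vert\le\mu(I)^{-1/p}\chi_I$ for $n>N$, so $\Vert\sup_n\vert S_{2^n}a\vert\Vert_p\le1$, and $p$-subadditivity of $\Vert\cdot\Vert_p^p$ gives $\Vert f\Vert_{H_p}\le c\left(\sum_k\vert\mu_k\vert^p\right)^{1/p}$. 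This is, incidentally, the only direction of the lemma that the paper actually uses in the proof of its Theorem 1.

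The necessity half, however, contains a genuine gap, and it sits exactly where you located the ``delicate step'': the $L_\infty$-bound on the would-be atoms. With $\tau_{k}:=\inf\{n:\vert f^{(n)}\vert>2^{k}\}$, the stopped difference $f^{(\tau_{k+1}\wedge n)}-f^{(\tau_{k}\wedge n)}$ contains the jump made \emph{at} time $\tau_{k+1}$, and for martingales this jump is not controlled by the preceding level: conditional expectation averages downward, so passing from $f^{(n)}$ to $f^{(n+1)}$ can inflate values arbitrarily, binary filtration or not. Concretely, take $f^{(0)}=0$ and $f^{(n)}=Mr_{0}$ for all $n\ge1$ with $M$ huge; this is a martingale since $E[Mr_{0}\mid\digamma_{0}]=\frac12(M-M)=0$, yet $\vert f^{(\tau_{k})}\vert=M$ for every $k$ with $2^{k}<M$, so no bound of the form $C2^{k+1}$ on the overshoot can hold --- your statement that ``a single averaging step can inflate the relevant conditional averages only by a bounded factor'' runs the averaging in the wrong direction. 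Worse, in this example $f^{(\tau_{k}\wedge n)}=f^{(n)}$ for all such $k$, so every telescoping difference vanishes and your series reconstructs $0$, not $f$: the identity $f^{(n)}=\sum_{k}\bigl(f^{(\tau_{k+1}\wedge n)}-f^{(\tau_{k}\wedge n)}\bigr)$ requires $f^{(\tau_{k}\wedge n)}\to0$ as $k\to-\infty$, which fails here. The correct proof (Weisz's) avoids the overshoot by stopping \emph{before} the threshold is crossed, which requires predictability: one proves the atomic decomposition for the space $P_p$ of martingales dominated by a non-decreasing predictable sequence $(\lambda_n)$, where the stopping time $\inf\{n:\lambda_{n+1}>2^{k}\}$ is measurable with respect to the earlier $\sigma$-algebra and the stopped martingale is bounded by $2^{k}$ exactly, and then invokes the non-trivial equivalence $H_p= P_p$ valid for regular stochastic bases such as the dyadic one. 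That equivalence is where regularity genuinely enters, and it cannot be replaced by the pointwise overshoot bound you assert.
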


We also state and prove the following new lemma of independent interest:
\begin{lemma}
Let $k\in \mathbb{N},$  $\left\{ q_{k}:k\in \mathbb{N}\right\}$ be any convex and non-increasing sequence and $x\in I_{2}(e_{0}+e_{1})\in I_{0}\backslash I_{1}.$ Then, for any $\{\alpha_k\},$ the following inequality holds:
\begin{eqnarray*}
\left\vert\sum_{j=2^{2\alpha_k}}^{2^{2\alpha _k+1}}q_{2^{2\alpha _k+1}-j}{D_j}
\right\vert  \geq q_1-\frac{3}{2}q_3.
\end{eqnarray*}
\end{lemma}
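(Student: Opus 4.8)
The plan is to first reduce the Dirichlet kernels to single Walsh functions on the set $I_{0}\backslash I_{1}$, which contains $I_{2}(e_{0}+e_{1})$. Writing $s:=2\alpha_{k}$, observe that for $x\in I_{0}\backslash I_{1}$ we have $x\notin I_{k}$ for every $k\geq 1$, so by \eqref{1dn} every $D_{2^{k}}(x)$ with $k\geq 1$ vanishes while $D_{1}(x)=1$. Substituting this into \eqref{2dn} collapses the inner sum to its $k=0$ term and yields $D_{j}(x)=-j_{0}w_{j}(x)$, which is $0$ for even $j$ and $-w_{j}(x)$ for odd $j$. Hence only odd indices survive, and after the change of variable $l=2^{s+1}-j$ the quantity to be estimated becomes $T:=-\sum_{l\ \mathrm{odd}}q_{l}\,w_{2^{s+1}-l}(x)$.

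Next I would simplify the Walsh factors. Since $l-1<2^{s}$, the index $2^{s+1}-l$ is the bitwise complement of $l-1$ in the positions $0,\dots,s$, so $w_{2^{s+1}-l}(x)=(-1)^{x_{0}+\dots+x_{s}}\,w_{l-1}(x)=\varepsilon\,w_{l-1}(x)$ with $\varepsilon=\pm 1$ independent of $l$. Writing $l=2m+1$ turns this into $T=-\varepsilon\sum_{m}q_{2m+1}w_{2m}(x)$. Using the identities $w_{2m}(x)=w_{m}(\dot x)$ and, since $\dot x_{0}=x_{1}=1$, $w_{2t+1}(\dot x)=-w_{2t}(\dot x)$, I can pair consecutive terms and obtain $T=\pm\sum_{t\geq 0}c_{t}\,w_{t}(\ddot x)$, where $c_{t}:=q_{4t+1}-q_{4t+3}\geq 0$ and $\dot x,\ddot x$ denote the once- and twice-shifted points. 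The hypotheses $x_{0}=1$ and $x_{1}=1$ are exactly what is needed here to make the $t=0$ term equal $q_{1}-q_{3}$. Since every $|w_{t}(\ddot x)|=1$ and the $c_{t}$ are nonnegative, the reverse triangle inequality isolates the leading term: $|T|\geq c_{0}-\sum_{t\geq 1}c_{t}=(q_{1}-q_{3})-\sum_{t\geq 1}(q_{4t+1}-q_{4t+3})$.

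It then remains to prove the convexity estimate $\sum_{t\geq 1}(q_{4t+1}-q_{4t+3})\leq \tfrac{1}{2}q_{3}$, which I expect to be the main obstacle: the cruder bound coming only from monotonicity produces $q_{5}$ or $q_{3}$ in place of $\tfrac{1}{2}q_{3}$, which is too weak (the extremal case being the linearly decaying sequence, where the estimate is sharp). Writing $d_{k}:=q_{k}-q_{k+1}\geq 0$, convexity says $\{d_{k}\}$ is non-increasing, and $q_{4t+1}-q_{4t+3}=d_{4t+1}+d_{4t+2}$. Comparing with $q_{3}\geq \sum_{k\geq 3}d_{k}$, the difference $\sum_{k\geq 3}d_{k}-2\sum_{t\geq 1}(d_{4t+1}+d_{4t+2})$ regroups as $\sum_{t\geq 0}\big[(d_{4t+3}-d_{4t+5})+(d_{4t+4}-d_{4t+6})\big]$, a sum of nonnegative terms by the monotonicity of $\{d_{k}\}$. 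This gives the estimate, and combining it with the previous paragraph yields $|T|\geq q_{1}-\tfrac{3}{2}q_{3}$, as claimed. The degenerate case $\alpha_{k}=0$, where the sum reduces directly to $q_{1}$, is immediate.
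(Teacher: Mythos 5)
Your proof is correct and follows essentially the same route as the paper's: both reduce $D_j$ on $I_0\setminus I_1$ to odd-indexed Walsh functions, pair consecutive odd terms using $x_1=1$ to produce the nonnegative differences $q_{4t+1}-q_{4t+3}$, isolate the leading term $q_1-q_3$ by the reverse triangle inequality, and bound the remaining sum by $\tfrac12 q_3$ using convexity (monotonicity of the difference sequence). The distinctions are only bookkeeping: your reflection $l=2^{s+1}-j$ with the bitwise-complement identity and the shifted points $\dot x,\ddot x$ replace the paper's factoring out of $w_1$ and its telescoping form of the same convexity estimate.
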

\begin{proof}
Let $x\in I_{2}(e_{0}+e_{1})\in I_{0}\backslash I_{1}.$ According to (\ref%
{1dn}) and (\ref{2dn}) we get that
\begin{equation*}
D_{j}\left( x\right) =\left\{ 
\begin{array}{ll}
w_{j}, & \,\text{if\thinspace \thinspace \thinspace }j\ \ \text{is odd
	number,} \\ 
0, & \text{if}\,\,j\ \ \text{is even number,}%
\end{array}%
\right.
\end{equation*}
and 
\begin{eqnarray*}
\sum_{j=2^{2\alpha_k}}^{2^{2\alpha _k+1}-1}q_{2^{2\alpha _k+1}-j}D_j=\sum_{j=2^{2\alpha _{k}-1}}^{2^{2\alpha _{k}}-1}q_{2^{2\alpha _{k}+1}-2j-1}{w_{2j+1}}=w_1\sum_{j=2^{2\alpha _{k}-1}}^{2^{2\alpha _{k}}-1}q_{
		2^{2\alpha _{k}+1}-2j-1}{w_{2j}}.
\end{eqnarray*}
By using \eqref{11} we find that
\begin{eqnarray*}
&&\sum_{j=2^{2\alpha _{k}-2}+1}^{2^{2\alpha _{k}-1}-1}\left\vert q_{2^{2\alpha _{k}+1}-4j+3}-q_{2^{2\alpha _{k}+1}-4j+1}\right\vert \\
&=&\sum_{j=2^{2\alpha _{k}-2}+1}^{2^{2\alpha _{k}-1}-1}\left( q_{
2^{2\alpha _{k}+1}-4j+1}-q_{2^{2\alpha _{k}+1}-4j+3}\right) \\
&=&\left(q_{2^{2\alpha _{k}}-3}-q_{2^{2\alpha _{k}}-1}\right)+\left( q_{
2^{2\alpha _{k}}-7}-q_{2^{2\alpha _{k}}-5}\right)+\ldots+\left( q_{5}-q_{7}\right)\\
&\leq&\frac12\left( q_{2^{2\alpha _{k}}-3}-q_{2^{2\alpha _{k}}-1}\right)+
\frac12\left( q_{2^{2\alpha _{k}}-5}-q_{2^{2\alpha _{k}}-3}\right)\\
&+&\frac12\left( q_{2^{2\alpha _{k}}-7}-q_{2^{2\alpha _{k}}-5}\right)
+\frac12\left( q_{2^{2\alpha _{k}}-9}-q_{2^{2\alpha _{k}}-7}\right)\\
&+&\ldots
+\frac12\left( q_{5}-q_{7}\right)+\frac12\left( q_{3}-q_{5}\right)\leq \frac12q_{3}-\frac12q_{2^{2\alpha _{k}}-1}.
\end{eqnarray*}
Hence, if we apply 
$$
w_{4k+2}=w_{2}w_{4k}=-w_{4k}, \ \ \text{for} \ \ x\in I_{2}(e_{0}+e_{1}),
$$
we find that

\begin{eqnarray*}
&&\left\vert\sum_{j=2^{2\alpha_k}}^{2^{2\alpha _k+1}-1}q_{2^{2\alpha _k+1}-j}{D_j}\right\vert\\
&=&\left\vert q_0w_{2^{2\alpha_{k}+1}-2}+q_3{w_{2^{2\alpha_{k}+1}-4}}+\sum_{j=2^{2\alpha _{k}-1}}^{2^{2\alpha _{k}}-1}q_{2^{2\alpha _{k}+1}-2j-1}{w_{2j}}\right\vert  \\
&=&\left\vert (q_3-q_1){2w_{2^{2\alpha _{k}+1}-4}} +\sum_{j=2^{2\alpha_{k}-2}+1}^{2^{2\alpha _{k}-1}}\left( q_{2^{2\alpha _{k}+1}-4j+3}{w_{4j-4}}-q_{2^{2\alpha _{k}+1}-4j+1}{w_{4j-4}}\right) \right\vert  \\
&\geq & q_1-q_3-\sum_{j=2^{2\alpha _{k}-2}+1}^{2^{2\alpha
			_{k}-1}}\left\vert q_{2^{2\alpha _{k}+1}-4j+3}-q_{2^{2\alpha
			_{k}+1}-4j+1}\right\vert  \\
&\geq&  q_1-q_3-\frac12(q_{3}-q_{2^{2\alpha _{k}}-1})\geq q_1-\frac{3}{2}q_3.
\end{eqnarray*}

The proof is complete.
\end{proof}

\section{The Main result}

% Our main result reads:
In previous Sections we have discussed a number of inequalities and sometimes their sharpness.
Our main result is the following new sharpness result:
\begin{theorem}
Let $0\leq \alpha\leq 1$, $\beta$ be any non-negative real number and $t_n$ be N\"{o}rlund means with convex and non-increasing sequence $\left\{ q_{k}:k\in \mathbb{N}\right\}$ satisfying the condition
\begin{equation}\label{jig}
\frac{q_1-({3}/{2})q_3}{Q_n}\geq \frac{C}{ n^\alpha\log^{\beta}n},
\end{equation}
for some positive constant $C.$ Then, for any $0<p<1/(1+\alpha)$ there exists a martingale $f\in H_{p}$ such that 
\begin{equation*}
\sup_{n\in \mathbb{N}} \left\Vert t_{2^n}f\right\Vert_{weak-L_p}=\infty.
\end{equation*}
\end{theorem}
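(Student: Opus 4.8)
The plan is to construct an explicit martingale $f\in H_p$ as a lacunary-type series of normalized $p$-atoms supported on the small dyadic intervals $I_{2^{2\alpha_k+1}}$, and then to show that the Nörlund means $t_{2^n}f$ evaluated on a suitable fixed set of positive measure (namely $I_2(e_0+e_1)\subset I_0\backslash I_1$, the set appearing in Lemma~2) blow up. First I would set $\alpha_k$ to be a rapidly increasing sequence (for instance $\alpha_k=k$ or a faster lacunary sequence, chosen so the supports $I_{2^{2\alpha_k+1}}$ are pairwise disjoint and the partial-sum interactions decouple). For each $k$ define an atom $a_k$ supported on $I_{2^{2\alpha_k+1}}$, with $\|a_k\|_\infty\leq \mu(I_{2^{2\alpha_k+1}})^{-1/p}=2^{(2\alpha_k+1)/p}$ and mean zero, built from a block of Walsh functions of indices between $2^{2\alpha_k}$ and $2^{2\alpha_k+1}$. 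Taking coefficients $\mu_k$ of size $\lambda_k=2^{(2\alpha_k+1)(1/p-1)}/(\text{correction})$ so that $\sum_k|\mu_k|^p<\infty$, Lemma~1 (the atomic characterization) guarantees $f=\sum_k\mu_k a_k\in H_p$.

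The next step is to compute $t_{2^n}f$ at the specific scale $n=2\alpha_k+1$. Because $S_{2^m}a_j$ vanishes for the scales $m$ that do not see the support of $a_j$, and because the Walsh--Dirichlet kernel relation \eqref{2dn} localizes the partial sums, the dominant contribution to $t_{2^{2\alpha_k+1}}f$ on $I_2(e_0+e_1)$ comes from the single block attached to $a_k$. Writing out $t_{2^{2\alpha_k+1}}f=\frac{1}{Q_{2^{2\alpha_k+1}}}\sum_{j=1}^{2^{2\alpha_k+1}}q_{2^{2\alpha_k+1}-j}S_jf$, I would isolate the sum over the critical block $2^{2\alpha_k}\leq j\leq 2^{2\alpha_k+1}$, which is exactly the quantity estimated in Lemma~2. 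Lemma~2 then yields the pointwise lower bound
\begin{equation*}
\left\vert\frac{1}{Q_{2^{2\alpha_k+1}}}\sum_{j=2^{2\alpha_k}}^{2^{2\alpha_k+1}}q_{2^{2\alpha_k+1}-j}D_j(x)\right\vert\geq \frac{q_1-(3/2)q_3}{Q_{2^{2\alpha_k+1}}}\geq \frac{C}{(2^{2\alpha_k+1})^\alpha\log^\beta(2^{2\alpha_k+1})}
\end{equation*}
on $I_2(e_0+e_1)$, where the last inequality is precisely the hypothesis \eqref{jig} evaluated at $n=2^{2\alpha_k+1}$. Combining this with the normalization factor $|\mu_k|\,\|a_k\|$-type weight produces a lower bound for $|t_{2^{2\alpha_k+1}}f(x)|$ of order $2^{(2\alpha_k+1)/p}\cdot 2^{-(2\alpha_k+1)\alpha}/k^{\text{poly}}$ on a set of measure $\sim 2^{-2}$.

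Finally I would convert this pointwise lower bound into a lower bound for the weak-$L_p$ quasinorm. Choosing the threshold $\lambda\sim 2^{(2\alpha_k+1)(1/p-\alpha)}/(\text{log factors})$, the set $\{|t_{2^{2\alpha_k+1}}f|>\lambda\}$ contains $I_2(e_0+e_1)$ up to the decoupling error, so $\mu\{|t_{2^{2\alpha_k+1}}f|>\lambda\}\gtrsim 2^{-2}$, whence
\begin{equation*}
\left\Vert t_{2^{2\alpha_k+1}}f\right\Vert_{weak-L_p}^p\geq \lambda^p\,\mu\{|t_{2^{2\alpha_k+1}}f|>\lambda\}\gtrsim 2^{(2\alpha_k+1)(1-p\alpha)}/(\text{log})^{p\beta},
\end{equation*}
which tends to $\infty$ as $k\to\infty$ exactly when $1-p\alpha>0$, i.e. $p<1/(1+\alpha)$ after accounting for the extra factor coming from the $H_p$-normalization (the quantity $1/p-\alpha-1>0$ is the sharp condition). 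The main obstacle I anticipate is the decoupling/error control: I must verify that the \emph{other} blocks $a_j$ with $j\neq k$ contribute negligibly to $t_{2^{2\alpha_k+1}}f$ on $I_2(e_0+e_1)$, which requires careful use of the kernel localization in \eqref{1dn}--\eqref{2dn} together with a sufficiently lacunary choice of $\{\alpha_k\}$ so that the tails $\sum_{j>k}|\mu_j|\,\|S_{2^{2\alpha_k+1}}a_j\|$ and the low-index contributions are dominated by the main term. Getting the exponent bookkeeping to land precisely at the endpoint $p=1/(1+\alpha)$ is the delicate quantitative point, but it is forced by the combination of the atom normalization and the growth rate in \eqref{jig}.
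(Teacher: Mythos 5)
Your proposal is correct in outline and takes essentially the same route as the paper: the same lacunary construction from Walsh-block atoms $2^{2\alpha_k(1/p-1)}\left(D_{2^{2\alpha_k+1}}-D_{2^{2\alpha_k}}\right)$ with $p$-summable coefficients, the same isolation of the critical block $2^{2\alpha_k}\leq j\leq 2^{2\alpha_k+1}$ estimated from below via Lemma 2 on the fixed set $I_2(e_0+e_1)$, control of the lower blocks by a sufficiently lacunar choice of $\{\alpha_k\}$ (the paper's conditions \eqref{3}--\eqref{5}), and the same conversion of the pointwise bound into a weak-$L_p$ lower bound. The only deviations are sketch-level bookkeeping slips, each of which you implicitly correct and which the paper resolves in the same way: the amplitude $2^{2\alpha_k(1/p-1)}$ must sit inside the atom while the atomic coefficients (the paper takes $\lambda_k=1/\sqrt{\alpha_k}$) carry the summability, the blocks with index larger than $k$ contribute exactly zero by spectral support so no decoupling estimate is needed there (only the low blocks require the lacunarity conditions), and your intermediate exponent $1-p\alpha$ should read $p(1/p-1-\alpha)$, which is exactly the sharp condition $p<1/(1+\alpha)$ you state at the end.
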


\begin{proof} 
Let $0<p<1/(1+\alpha)$. Under condition \eqref{jig} there exists a sequence $\left\{ n _{k}:k\in \mathbb{N}\right\}$ such that
$$\frac{2^{2n_{k}(1/p-1)}}{n_{k}Q_{2^{2n_k+1}}}\geq \frac{2^{2n_{k}(1/p-1-\alpha)}}{n_k^{\beta+1}}\to \infty, \ \ \text{as} \ \ k\to\infty.$$
Let $\left\{ \alpha _{k}:k\in \mathbb{N}\right\}\subset \left\{ n _{k}:k\in \mathbb{N}\right\} $ be an increasing sequence of
positive integers such that
\begin{equation}
\sum_{k=0}^{\infty }\alpha _{k}^{-p/2}<\infty ,  \label{3}
\end{equation}
\begin{equation}
\sum_{\eta =0}^{k-1}\frac{\left(2^{2\alpha_{\eta}}\right)^{1/p}}{\sqrt{
\alpha_{\eta}}}<\frac{\left(2^{2\alpha_{k}}\right)^{1/p}}{\sqrt{\alpha
_{k}}}  \label{4}
\end{equation}
and
\begin{equation}\label{5}
\frac{\left( 2^{2\alpha _{k-1}}\right) ^{1/p}}{\sqrt{\alpha _{k-1}}}<\frac{q_1-q_3-({3}/{2})q_5}{Q_{2^{2\alpha _{k}+1}}}\frac{2^{2\alpha
		_{k}(1/p-1)-3}}{\alpha _{k}}.  
\end{equation}

Let 
\begin{equation*}
f^{\left( n\right) } :=\sum_{\left\{ k;\text{ }2\alpha
_{k}<n\right\} }\lambda _{k}a_{k},
\end{equation*}
where 
\begin{equation*}
\lambda _{k}=\frac{1}{\sqrt{\alpha _{k}}} \ \ \ \text{ and } \ \ \ 
a_{k}={2^{2\alpha _{k}(1/p-1)}}\left( D_{2^{2\alpha _{k}+1}}-D_{2^{2\alpha
_{k}}}\right) .
\end{equation*}

From (\ref{3}) and Lemma \ref{Weisz} we find that $f \in H_{p}.$ 

It is easy to prove that
\begin{equation}
\widehat{f}(j)=\left\{ 
\begin{array}{l}
\frac{2^{2\alpha _{k}(1/p-1)}}{\sqrt{\alpha _{k}}},\,\,\text{ if \thinspace
\thinspace }j\in \left\{ 2^{2\alpha _{k}},...,\text{ ~}2^{2\alpha
_{k}+1}-1\right\} ,\text{ }k\in \mathbb{N}, \\ 
0,\text{ \thinspace \thinspace \thinspace if \thinspace \thinspace
\thinspace }j\notin \bigcup\limits_{k=1}^{\infty }\left\{ 2^{2\alpha
_{k}},...,\text{ ~}2^{2\alpha _{k}+1}-1\right\}\text{ .}
\end{array}%
\right.  \label{8}
\end{equation}

Moreover,
\begin{eqnarray} \label{10a}
&&t_{2^{2\alpha _{k}+1}}f \\ \notag
&=&\frac{1}{Q_{2^{2\alpha _{k}+1}}}\sum_{j=1}^{2^{2\alpha _{k}}-1}q_{2^{2\alpha _{k}+1}-j}{S_{j}f}+\frac{1}{Q_{2^{2\alpha _{k}+1}}}%
\sum_{j=2^{2\alpha _{k}}}^{2^{2\alpha _{k}+1}-1}q_{{2^{2\alpha
_{k}+1}-j}} {S_{j}f} \\
&:=&I+II.  \notag
\end{eqnarray}

Let $j<2^{2\alpha _{k}}.$ By combining (\ref{4}), (\ref{5}) and (\ref{8}) we can conclude that
\begin{eqnarray*}
\left\vert S_{j}f\left( x\right) \right\vert 
&\leq &\sum_{\eta =0}^{k-1}\sum_{v=2^{2\alpha _{\eta }}}^{2^{2\alpha _{\eta
}+1}-1}\left\vert \widehat{f}(v)\right\vert \\
&\leq& \sum_{\eta
=0}^{k-1}\sum_{v=2^{2\alpha _{\eta }}}^{2^{2\alpha _{\eta }+1}-1}\frac{
2^{2\alpha _{\eta }(1/p-1)}}{\sqrt{\alpha _{\eta }}} \leq \sum_{\eta =0}^{k-1}\frac{2^{2\alpha _{\eta }/p}}{\sqrt{\alpha _{\eta}}} \leq \frac{2^{2\alpha _{k-1}/p+1}}{\sqrt{\alpha _{k-1}}}.
\end{eqnarray*}
Hence,
\begin{eqnarray} \label{11a}
\left\vert I\right\vert &\leq &\frac{1}{Q_{2^{2\alpha _{k}+1}}}\underset{j=1}
{\overset{2^{2\alpha _{k}}-1}{\sum }}q_{2^{2\alpha _{k}+1}-j}{\left\vert S_{j}f\left( x\right)
	\right\vert }  \\ \notag
&\leq &\frac{1}{Q_{2^{2\alpha _{k}+1}}}\frac{2^{2\alpha _{k-1}/p}}{\sqrt{%
\alpha _{k-1}}}\sum_{j=1}^{M_{2\alpha _{k}+1}-1}q_{j}\leq\frac{2^{2\alpha _{k-1}/p}}{\sqrt{\alpha _{k-1}}}.  
\end{eqnarray}

Let $2^{2\alpha _{k}}\leq j\leq 2^{2\alpha_{k}+1}-1.$ Since
\begin{eqnarray*}
S_{j}f &=&\sum_{\eta =0}^{k-1}\sum_{v=2^{2\alpha _{\eta }}}^{2^{2\alpha
_{\eta }+1}-1}\widehat{f}(v)w_{v}+\sum_{v=2^{2\alpha _{k}}}^{j-1}\widehat{f}%
(v)w_{v} \\
&=&\sum_{\eta =0}^{k-1}\frac{2^{{2\alpha _{\eta }}\left( 1/p-1\right) }}{%
\sqrt{\alpha _{\eta }}}\left( D_{2^{2\alpha _{\eta }+1}}-D_{2^{2\alpha
_{\eta }}}\right) +\frac{2^{{2\alpha _{k}}\left( 1/p-1\right) }}{\sqrt{\alpha _{k}}}\left(
D_{j}-D_{2^{{2\alpha _{k}}}}\right),
\end{eqnarray*}
for $II$ we can conclude that
\begin{eqnarray}\label{12a}
&&II=
\frac{1}{Q_{2^{2\alpha_{k}+1}}}\underset{j=2^{2\alpha _{k}}}{\overset{%
2^{2\alpha _{k}+1}}{\sum }}\ q_{2^{2\alpha _{k}+1}-j}\left( \sum_{\eta
=0}^{k-1}\frac{2^{2\alpha_{\eta }\left( 1/p-1\right) }}{\sqrt{\alpha_{\eta}}}\left( D_{2^{2\alpha_{\eta }+1}}-D_{2^{2\alpha_{\eta }}}\right)\right)  \\
&&\ \ \ \ +\frac{1}{Q_{2^{2\alpha _{k}+1}}}\frac{2^{2\alpha _{k}\left( 1/p-1\right) }}{%
\sqrt{\alpha_k}}\sum_{j=2^{2\alpha_k}}^{2^{2\alpha _{k}+1}-1}
q_{2^{2\alpha _{k}+1}-j}{\left(D_j-D_{2^{2\alpha_k}}\right)}. \notag
\end{eqnarray}

Let $x\in I_{2}(e_0+e_1)\in I_0\backslash I_1.$ According to that $\alpha _{0}\geq 1$ we get that $2\alpha_k\geq 2,$ for all $k\in \mathbb{N}$ and if use (\ref{1dn}) we get
that $D_{2^{2\alpha_k}}=0$ 
and if we use Lemma 2 we can also conclude that
\begin{eqnarray}\label{14a}
II&=&\frac{1}{Q_{2^{2\alpha _{k}+1}}}\frac{2^{2\alpha _{k}(1/p-1)}}{
\sqrt{\alpha _{k}}}\sum_{j=2^{2\alpha_k}}^{2^{2\alpha _{k}+1}-1}
q_{2^{2\alpha _{k}+1}-j}{D_j}\\ \notag
&\geq& \frac{q_1-(3/2)q_3}{Q_{2^{2\alpha _{k}+1}}%
}\frac{2^{2\alpha _{k}(1/p-1)}}{\sqrt{\alpha _{k}}}.
\end{eqnarray}%
By combining (\ref{5}), (\ref{10a})-(\ref{14a}) for $x\in I_{2}(e_{0}+e_{1})$ we have that
\begin{eqnarray*}
\left\vert t_{2^{2\alpha_k+1}}f\left(x\right)\right\vert &\geq& II-I \\
&\geq &\frac{q_1-({3}/{2})q_3}{Q_{2^{2\alpha _{k}+1}}}\frac{2^{2\alpha _{k}(1/p-1)}}
{\sqrt{\alpha _{k}}}-\frac{q_1-(3/2)q_3}{Q_{2^{2\alpha _{k}+1}}}\frac{2^{2\alpha
_{k}(1/p-1)-3}}{\alpha _{k}} \\
&\geq &\frac{q_1-(3/2)q_3}{Q_{2^{2\alpha _k+1}}}\frac{2^{2\alpha_k(1/p-1)-3}}
{\sqrt{\alpha_k}}\geq \frac{C2^{2\alpha _{k}(1/p-1-\alpha)-3}}{\left(\ln 2^{2\alpha_k+1}+1\right)^{\beta}\sqrt{\alpha_k}} \\
&\geq& \frac{C2^{2\alpha _{k}(1/p-1-\alpha)-3}}{{\alpha^{\beta+1} _{k}}}.
\end{eqnarray*}

Hence, we can conclude that
\begin{eqnarray*}
&&\left\Vert t_{2^{2\alpha _{k}+1}}f\right\Vert _{weak-L_{p}}  \\
&\geq &\frac{C2^{2\alpha _{k}(1/p-1-\alpha)-3}}{\alpha^{\beta+1}_k}\mu \left\{ x\in G:\left\vert t_{2^{2\alpha
_{k}+1}}f\right\vert \geq \frac{C2^{2\alpha _{k}(1/p-1)-3}}{\alpha^{\beta+1}_k}\right\} ^{1/p} \\
&\geq &\frac{C2^{2\alpha _{k}(1/p-1-\alpha)-3}}{\alpha^{\beta+1}_k}\mu \left\{ x\in I_{2}(e_{0}+e_{1}):\left\vert
t_{2^{2\alpha _{k}+1}}f\right\vert \geq \frac{C2^{2\alpha _{k}(1/p-1)-6}}{\alpha^{\beta+1}_k}\right\} ^{1/p} \\
&\geq &\frac{C2^{2\alpha _{k}(1/p-1-\alpha)-3}}{\alpha^{\beta+1}_k}(\mu \left(  I_{2}(e_{0}+e_{1})\right) )^{1/p} \\
&>&\frac{c2^{2\alpha _{k}(1/p-1-\alpha)}}{\alpha _{k}^{\beta+1}}\rightarrow \infty,
\text{ \ \ as \ \ }k\rightarrow \infty .
\end{eqnarray*}

The proof is complete.
\end{proof}

In a concrete case we get a result for Nörlund logarithmic means $\left\{ L_n \right\}$ proved in \cite{BPST}:

\begin{corollary}
	Let $0<p<1.$ Then there exists a martingale $f\in H_{p}$ such that 
	\begin{equation*}
	\sup_{n\in \mathbb{N}} \left\Vert L_{2^n}f\right\Vert_{weak-L_p}=\infty.
	\end{equation*}
\end{corollary}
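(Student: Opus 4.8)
The plan is to obtain this Corollary as a direct specialization of Theorem 1 to the Nörlund logarithmic means, that is, to the choice $q_k = 1/(k+1)$, together with the parameters $\alpha = 0$ and $\beta = 1$ in the hypotheses of that theorem. With $\alpha = 0$ the admissible range $0<p<1/(1+\alpha)$ becomes exactly $0<p<1$, which matches the statement of the Corollary, so the only work is to check that the sequence $\{q_k = 1/(k+1)\}$ meets the structural assumptions of Theorem 1 and satisfies the growth condition \eqref{jig}.

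First I would verify that $\{q_k = 1/(k+1)\}$ is non-increasing and convex. Monotonicity is immediate, and convexity follows either by observing that $\{1/(k+1)\}$ samples the convex function $\psi(x) = 1/(x+1)$ (as recorded in Section 2), or by the direct computation
\begin{equation*}
q_{n-1}+q_{n+1}-2q_n = \frac{1}{n}+\frac{1}{n+2}-\frac{2}{n+1} = \frac{2}{n(n+1)(n+2)} > 0 .
\end{equation*}
Next I would pin down the relevant constant and the normalizing factor. Since $q_1 = 1/2$ and $q_3 = 1/4$, we get $q_1 - (3/2)q_3 = 1/2 - 3/8 = 1/8 > 0$, a strictly positive absolute constant. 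Moreover $Q_n = \sum_{k=0}^{n-1} 1/(k+1) = l_n$, and the elementary estimate $l_n \le 1 + \log n \le c\log n$ for $n\ge 2$ yields
\begin{equation*}
\frac{q_1 - (3/2)q_3}{Q_n} = \frac{1}{8\,l_n} \ge \frac{C}{\log n}, \qquad C := \frac{1}{8c},
\end{equation*}
which is precisely condition \eqref{jig} with $\alpha = 0$ and $\beta = 1$.

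With all hypotheses confirmed, I would invoke Theorem 1: for every $0<p<1$ it furnishes a martingale $f\in H_p$ with $\sup_{n}\left\Vert t_{2^n}f\right\Vert_{weak-L_p} = \infty$, and since here $t_{2^n} = L_{2^n}$ this is exactly the desired conclusion. There is no genuine obstacle beyond these routine verifications; the only point requiring a little care is to match the purely logarithmic growth of $Q_n = l_n$ with the factor $\log^{\beta}n$ in \eqref{jig}, which forces the choice $\beta = 1$, while $\alpha = 0$ is dictated by the requirement that the full range $0<p<1$ be recovered.
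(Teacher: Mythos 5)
Your proposal is correct and follows essentially the same route as the paper: specialize Theorem 1 to $q_k=1/(k+1)$, compute $q_1-(3/2)q_3=1/2-3/8=1/8>0$, verify condition \eqref{jig}, and read off the conclusion with $t_{2^n}=L_{2^n}$. One point deserves emphasis: your choice $\beta=1$ is in fact the correct one, whereas the paper's own proof asserts that \eqref{jig} holds with $\alpha=\beta=0$; that cannot be literally true, since $Q_n=l_n\sim\log n\to\infty$ implies $(q_1-(3/2)q_3)/Q_n\to 0$, so no positive constant $C$ can satisfy \eqref{jig} when $\alpha=\beta=0$. Your observation that the purely logarithmic growth of $Q_n$ forces $\beta=1$, together with the remark that the admissible range $0<p<1/(1+\alpha)$ in Theorem 1 depends only on $\alpha$ (the factor $\log^{\beta}n$ is harmless in the theorem's proof because the exponential $2^{2\alpha_k(1/p-1-\alpha)}$ dominates any power of $\alpha_k$), is exactly what makes the specialization legitimate. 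In short, your write-up follows the paper's approach but repairs a small slip in the published argument rather than deviating from it.
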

\begin{proof}
		It is easy to show that
$$q_1-({3}/{2})q_3=\frac{1}{2}-\frac{3}{2}\cdot\frac{1}{4}=\frac{1}{8}>0,$$
%	$$q_1-q_3-({1}/{2})q_5=\frac{1}{2}-\frac{1}{4}-\frac{1}{12}>0,$$
and condition \eqref{jig} holds true for $\alpha=\beta=0.$
\end{proof}

We also get similar new result for the $V_n$ means:
\begin{corollary}
	Let $0<p<1.$ Then there exists a martingale $f\in H_{p}$ such that 
	\begin{equation*}
	\sup_{n\in \mathbb{N}} \left\Vert V_{2^n}f\right\Vert_{weak-L_p}=\infty.
	\end{equation*}
\end{corollary}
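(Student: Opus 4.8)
The plan is to deduce this corollary directly from the Theorem, exactly as was done for the logarithmic means $L_n$. Since the statement concerns the subsequence $\{V_{2^n}f\}$ of a N\"orlund mean, it suffices to recognize $V_n$ as a N\"orlund mean $t_n$ generated by a concrete sequence $\{q_k\}$ and then to verify that this sequence satisfies all hypotheses of the Theorem, namely that it is non-increasing and convex and that it obeys the growth condition \eqref{jig} for some admissible $0\le\alpha\le1$ and $\beta\ge0$. Reading off the generating sequence from the normalization $Q_n=\sum_{k=1}^{n}1/\ln(k+1)$, I would take $q_k=1/\ln(k+1)$.

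First I would check the two structural hypotheses. Monotonicity is immediate: $\ln(k+1)$ is increasing, hence $q_k=1/\ln(k+1)$ is non-increasing. For convexity I would use the remark from Section 2 that $\{\psi(n)\}$ is convex whenever $\psi$ is convex. Here $\psi(x)=1/\ln(x+1)$, and a direct differentiation gives
\begin{equation*}
\psi''(x)=\frac{\ln(x+1)+2}{(x+1)^{2}\,(\ln(x+1))^{3}}>0\qquad(x>0),
\end{equation*}
so $\psi$ is convex and therefore $\{q_k\}$ is a convex sequence, as required.

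Next I would evaluate the constant entering \eqref{jig}. Since $q_1=1/\ln2$ and $q_3=1/\ln4=1/(2\ln2)$, we obtain
\begin{equation*}
q_1-\tfrac{3}{2}q_3=\frac{1}{\ln2}-\frac{3}{2}\cdot\frac{1}{2\ln2}=\frac{1}{4\ln2}>0,
\end{equation*}
which is precisely the strictly positive constant the Theorem asks for. It then remains to estimate $Q_n=\sum_{k=1}^{n}1/\ln(k+1)$ and to match it against the right-hand side of \eqref{jig}. Bounding each summand by $1/\ln2$ already gives $Q_n\le n/\ln2$, whence
\begin{equation*}
\frac{q_1-(3/2)q_3}{Q_n}\ge\frac{1/(4\ln2)}{n/\ln2}=\frac{1}{4n},
\end{equation*}
so a bound of the form \eqref{jig} is in force; the Theorem then applies and yields the claimed unboundedness of $\{V_{2^n}f\}$ from $H_p$ to $weak-L_p$.

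The differentiation for convexity and the arithmetic for $q_1-(3/2)q_3$ are routine. The one genuinely delicate point is the estimate of $Q_n$: its sharp order of growth, which by the logarithmic integral is $Q_n\sim n/\ln n$, is what actually fixes the admissible exponent $\alpha$ in \eqref{jig} and hence the precise range of $p$ for which the conclusion holds. Pinning down this order, rather than merely verifying convexity and the positivity of $q_1-(3/2)q_3$, is therefore the step that demands the most care.
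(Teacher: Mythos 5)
Your reduction to the Theorem follows the same route as the paper, and most of the verification is sound: the generating sequence is indeed $q_k=1/\ln(k+1)$, it is non-increasing and convex, and $q_1-(3/2)q_3=1/(4\ln 2)>0$, exactly as the paper computes. The genuine gap is in the last step, the parameter bookkeeping in \eqref{jig}. Your estimate $Q_n\le n/\ln 2$ gives
\begin{equation*}
\frac{q_1-(3/2)q_3}{Q_n}\ \ge\ \frac{1}{4n},
\end{equation*}
which verifies \eqref{jig} with $\alpha=1$, $\beta=0$. But the Theorem's conclusion is then available only for $0<p<1/(1+\alpha)=1/2$, whereas the corollary claims divergence for all $0<p<1$. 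So your argument, as written, proves strictly less than the statement; the sentence ``the Theorem then applies and yields the claimed unboundedness'' glosses over the fact that the exponent $\alpha$ you actually verified cuts the admissible range of $p$ in half. Moreover, this cannot be repaired within your framework: since, as you note, $Q_n\sim n/\ln n$, the requirement $c\ln n/n\ge C/(n^{\alpha}\log^{\beta}n)$ amounts to $n^{\alpha-1}\log^{\beta+1}n\ge C'$, which fails for every $\alpha<1$ no matter how large $\beta$ is taken. Hence $\alpha=1$ is the best possible exponent, and the Theorem can never deliver more than $0<p<1/2$ for the $V_n$ means.

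For comparison, the paper's own proof does the same arithmetic for $q_1-(3/2)q_3$ and then simply asserts that \eqref{jig} ``holds true for $\alpha=\beta=0$.'' That assertion is false: with $\alpha=0$ the condition forces $Q_n=O(\log^{\beta}n)$, while in fact $Q_n\sim n/\ln n\to\infty$ much faster. (The same sloppiness is harmless for the logarithmic means $L_n$, where $Q_n=l_n\sim\ln n$ and one can take $\alpha=0$, $\beta=1$, so that corollary's range $0<p<1$ survives; it is fatal here precisely because the weights $1/\ln(k+1)$ are nowhere near summable at a polylogarithmic rate.) So your more careful treatment of $Q_n$ actually exposes a defect in the paper: the corollary in the stated range $0<p<1$ does not follow from the Theorem as formulated, and what both your argument and a corrected version of the paper's argument establish is the statement for $0<p<1/2$ only.
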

\begin{proof}
	It is easy to show that
	$$q_1-({3}/{2})q_3=\frac{1}{\ln 2}-\frac{3}{2}\cdot\frac{1}{\ln4}=\log_2^e-(3/2)\frac{\log_2^e}{\log_2^4}=\log_2^e\left(1-\frac{3}{4}\right)>0,$$
	%	$$q_1-q_3-({1}/{2})q_5=\frac{1}{2}-\frac{1}{4}-\frac{1}{12}>0,$$
	and condition \eqref{jig} holds true for $\alpha=\beta=0.$
\end{proof}

We also get a corresponding new result for the Ces\`aro means $\sigma^\alpha_{2^n}.$

\begin{corollary}
	Let $0<p<1/(1+\alpha),$ for some $ 0< \alpha\leq 0.56.$ Then there exists a martingale $f\in H_{p}$ such that 
	\begin{equation*}
	\sup_{n\in \mathbb{N}} \left\Vert \sigma^\alpha_{2^n}f\right\Vert_{weak-L_p}=\infty.
	\end{equation*}
\end{corollary}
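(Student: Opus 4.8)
The plan is to realize the Ces\`aro means as a special case of the N\"orlund means treated in the Theorem and then to verify its hypotheses for the appropriate range of $\alpha$. Writing $\sigma_n^\alpha$ in the form $t_n$, one reads off the generating sequence $q_k=A_k^{\alpha-1}$, while the normalization is $Q_n=A_n^\alpha$, which by the last relation in \eqref{node0} satisfies $A_n^\alpha\sim n^\alpha$. It therefore suffices to check three things: that $\{q_k\}$ is non-increasing, that it is convex, and that condition \eqref{jig} holds with a positive constant $C$ and a suitable exponent; once these are in place the desired divergence is immediate from the Theorem.

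For the monotonicity and convexity I would exploit the difference identity $A_n^{\gamma}-A_{n-1}^{\gamma}=A_n^{\gamma-1}$ from \eqref{node0}. Applying it with $\gamma=\alpha-1$ gives the first difference $q_n-q_{n-1}=A_n^{\alpha-2}$, and a second application gives the second difference $q_{n+1}-2q_n+q_{n-1}=A_{n+1}^{\alpha-3}$. Since $0<\alpha<1$, the product defining $A_n^{\alpha-2}$ contains exactly one negative factor, namely $(\alpha-1)$, so $A_n^{\alpha-2}<0$ and $\{q_k\}$ is non-increasing; likewise the product defining $A_{n+1}^{\alpha-3}$ contains exactly the two negative factors $(\alpha-2)$ and $(\alpha-1)$, so $A_{n+1}^{\alpha-3}>0$ and $\{q_k\}$ is convex. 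Hence Lemma 2 and the Theorem apply to $\sigma^\alpha_{2^n}$.

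It then remains to compute the crucial quantity $q_1-(3/2)q_3$ and to confirm \eqref{jig}. From $A_1^{\alpha-1}=\alpha$ and $A_3^{\alpha-1}=\alpha(\alpha+1)(\alpha+2)/6$ one obtains
\[q_1-\frac{3}{2}q_3=\alpha\left(1-\frac{(\alpha+1)(\alpha+2)}{4}\right),\]
which is strictly positive precisely when $(\alpha+1)(\alpha+2)<4$, that is, when $\alpha<(\sqrt{17}-3)/2=0.5615\ldots$; this is exactly the origin of the restriction $\alpha\leq 0.56$ in the statement. Finally, since $Q_n=A_n^\alpha\sim n^\alpha$, there is a constant $c>0$ with $Q_n\leq c\,n^\alpha$, and hence, taking $\beta=0$,
\[\frac{q_1-(3/2)q_3}{Q_n}\geq\frac{q_1-(3/2)q_3}{c\,n^\alpha}=\frac{C}{n^\alpha}\]
with $C:=(q_1-(3/2)q_3)/c>0$, so \eqref{jig} holds. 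Invoking the Theorem with this data completes the argument.

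The only genuinely delicate point is the threshold computation: the whole construction requires $q_1-(3/2)q_3$ to be strictly positive (otherwise the right-hand side of \eqref{jig} cannot be bounded below by a positive constant), and it is this positivity that forces $\alpha\leq 0.56$. By contrast, the convexity and monotonicity reduce to reading off the signs of the coefficients $A_n^{\alpha-2}$ and $A_{n+1}^{\alpha-3}$, which is routine once the difference identity in \eqref{node0} is used, and the verification of \eqref{jig} is immediate from $A_n^\alpha\sim n^\alpha$ with the harmless choice $\beta=0$.
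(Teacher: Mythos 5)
Your proposal is correct and follows essentially the same route as the paper's proof: identify $q_k=A_k^{\alpha-1}$, compute $q_1-\tfrac{3}{2}q_3=\alpha\bigl(2-3\alpha-\alpha^2\bigr)/4$, observe that this is positive exactly for $\alpha<(\sqrt{17}-3)/2\approx 0.5615$ (whence the $0.56$ threshold), and verify condition \eqref{jig} with $\beta=0$ from $Q_n\sim n^{\alpha}$. Your explicit check that $\{A_k^{\alpha-1}\}$ is non-increasing and convex via the difference identity in \eqref{node0} fills in a detail the paper leaves implicit, but the argument is the same.
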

\begin{proof}
By a routine calculation we find that
$$q_1-({3}/{2})q_3=\alpha-\frac{\alpha(\alpha+1)(\alpha+2)}{4}=\alpha\cdot\frac{2-3\alpha-\alpha^2}{4}.$$
%$$q_1-q_3-({1}/{2})q_5=\alpha\frac{136-\alpha^4-50\alpha^3-35\alpha^2-170\alpha}{240}.$$
%It is easy to show that $ 0< \alpha<0.638$ this expression is positive.
It is easy to show that when $ 0< \alpha<0.56$ this expression is positive.
Hence, condition \eqref{jig} holds true for $\beta=0$ and $0<\alpha<1.$
\end{proof}

\begin{corollary}
	Let $0<p<1/(1+\alpha),$ for some $ 0< \alpha\leq 0.41.$ Then there exists a martingale $f\in H_{p}$ such that 
	\begin{equation*}
	\sup_{n\in \mathbb{N}} \left\Vert U^\alpha_{2^n}f\right\Vert_{weak-L_p}=\infty.
	\end{equation*}
\end{corollary}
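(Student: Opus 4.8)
The plan is to recognize the $U^\alpha_n$ means as Nörlund means and to verify, for their generating sequence, the three hypotheses of the main Theorem: that $\{q_k\}$ is non-increasing, that it is convex, and that it satisfies condition \eqref{jig}. Matching
\[
U^\alpha_nf=\frac{1}{Q_n}\sum_{k=1}^{n}(n+1-k)^{\alpha-1}S_kf
\]
with the general Nörlund mean $t_nf=\frac{1}{Q_n}\sum_{k=1}^{n}q_{n-k}S_kf$ and substituting $j=n-k$ identifies the generating sequence as $q_j=(j+1)^{\alpha-1}$, with $Q_n=\sum_{k=0}^{n-1}q_k=\sum_{j=1}^{n}j^{\alpha-1}$, in agreement with the $Q_n$ appearing in the definition of $U^\alpha_n$.

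Next I would check the two structural hypotheses. Since $0<\alpha\leq 0.41<1$ we have $\alpha-1<0$, so $q_k=(k+1)^{\alpha-1}$ is non-increasing. For convexity, write $q_k=\psi(k+1)$ with $\psi(x)=x^{\alpha-1}$; then $\psi''(x)=(\alpha-1)(\alpha-2)x^{\alpha-3}>0$ on $(0,\infty)$, so $\psi$ is convex and hence $\{q_k\}$ is convex, precisely the example $\psi(x)=x^{\alpha-1}$ already recorded in Section 2.

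It then remains to verify \eqref{jig} with $\beta=0$. Setting $u=2^{\alpha-1}$ and using $q_1=2^{\alpha-1}=u$ and $q_3=4^{\alpha-1}=u^2$, one finds
\[
q_1-\tfrac{3}{2}q_3=u\left(1-\tfrac{3}{2}u\right),
\]
which is strictly positive exactly when $u<2/3$, i.e. when $\alpha<2-\log_2 3\approx 0.415$; in particular it is positive for every $0<\alpha\leq 0.41$. Combining this with the elementary estimate $Q_n=\sum_{k=1}^{n}k^{\alpha-1}\leq 1+\int_1^{n}x^{\alpha-1}\,dx=O(n^\alpha)$ gives $(q_1-(3/2)q_3)/Q_n\geq C/n^\alpha$ for a suitable positive constant $C$, so \eqref{jig} holds with $\beta=0$ and the stated $\alpha$. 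Since $0<p<1/(1+\alpha)$ is exactly the range admitted in the Theorem, the conclusion follows at once by applying it.

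The main obstacle is the threshold computation: one must determine the precise range of $\alpha$ for which $q_1-(3/2)q_3>0$, and it is this positivity requirement alone that forces the restriction to $\alpha\leq 0.41$, the exact cutoff being $\alpha=2-\log_2 3$. Everything else — the monotonicity, the convexity via $\psi(x)=x^{\alpha-1}$, and the $O(n^\alpha)$ growth of $Q_n$ — is routine and valid for all $0<\alpha<1$.
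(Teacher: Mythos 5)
Your proposal is correct and follows essentially the same route as the paper: identify the generating sequence $q_j=(j+1)^{\alpha-1}$, note it is non-increasing and convex, compute $q_1-\tfrac{3}{2}q_3$ and its positivity threshold $\alpha<2-\log_2 3\approx 0.415$, and apply the main Theorem with $\beta=0$ using $Q_n=O(n^\alpha)$. In fact your computation is slightly more careful than the paper's, which misprints the exponents as $2^{1-\alpha}-\tfrac{3}{2}\,4^{1-\alpha}$ (rather than $2^{\alpha-1}-\tfrac{3}{2}\,4^{\alpha-1}$) before arriving at the same factor $\left(1-3/2^{2-\alpha}\right)$ and the same cutoff.
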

\begin{proof}
	By a straightforward calculation we find that
	$$q_1-({3}/{2})q_3=2^{1-\alpha}-({3}/{2})4^{1-\alpha}=2^{1-\alpha}\left(1-3/ 2^{2-\alpha}\right).$$
	%$$q_1-q_3-({1}/{2})q_5=\alpha\frac{136-\alpha^4-50\alpha^3-35\alpha^2-170\alpha}{240}.$$
	%It is easy to show that $ 0< \alpha<0.638$ this expression is positive.
	It is easy to show that when $ 0< \alpha<0.41$ this expression is positive. 
	Hence, condition \eqref{jig} holds true for $\beta=0$ and $0<\alpha<1.$
\end{proof}

\section{Open questions and final remarks}

%As we have mentioned in the introduction, Nörlund logarithmic means $\left\{ L_n \right\}$ are not bounded from $H_p(G)$ to the space $L_p(G)$ for $0<p\leq 1.$ On the other hand, if we consider subsequence $\left\{ L_{2^k}\right\},$ then they are bounded from $H_1(G)$ to  $H_1(G),$ but boundedness of $\left\{ L_{2^k}\right\}$ does not hold from the martingale Hardy space  $H_p(G)$ to the space weak-$L_p(G)$ for $0<p<1.$
%
%To analyze these results, in the endpoint case $p=1$ we have negative result for $\left\{ L_{n}\right\},$ but positive result for subsequence $\left\{ L_{2^k}\right\}.$
%
%The endpoint case for Ces\`aro means $\sigma_n^{\alpha}$ is $p=1/(1+\alpha).$ So the following interesting open problems arise immediately:

\begin{remark}
This article can be regarded as a complement of the new book \cite{PTWbook}. In this book also a number of open problems are raised. Also this new investigation implies some corresponding open questions.
\end{remark}

\textbf{Open Problem 1:} Let $0<p<1/(1+\alpha),$ for some $ 0.56< \alpha<1.$ Does there exist a martingale $f\in H_{p}$ such that 
\begin{equation*}
\sup_{n\in \mathbb{N}} \left\Vert \sigma^\alpha_{2^n}f\right\Vert_{weak-L_p}=\infty?
\end{equation*}

\textbf{Open Problem 2:} Let $0<p<1/(1+\alpha),$ for some $ 0.41< \alpha<1.$ Does there exist a martingale $f\in H_{p}$ such that 
\begin{equation*}
\sup_{n\in \mathbb{N}} \left\Vert U^\alpha_{2^n}f\right\Vert_{weak-L_p}=\infty?
\end{equation*}

We also can investigate similar problems for more general summability methods:

\textbf{Open Problem 3:} Let $0<p<1/(1+\alpha),$ for some $ 0.56< \alpha<1$  and $t_n$ be  Nörlund means of Walsh-Fourier series with non-increasing and convex sequence $\left\{ q_{k}:k\in \mathbb{N}\right\},$ satisfying the condition \eqref{jig}.

Does there exist a martingale $f\in H_{1/(1+\alpha)} (0<p<1),$ such that
$$
\sup_{n\in\mathbb{N}}\left\Vert t_{2^n}f\right\Vert _{H_{1/(1+\alpha)}}=\infty?
$$

\textbf{Open Problem 4:} Let $f\in H_{1/(1+\alpha)},$ where $ 0< \alpha<1.$ Does there exists an absolute constant $C_\alpha,$ such that the following inequality holds 
\begin{equation*}
\left\Vert \sigma^\alpha_{2^n}f\right\Vert_{1/(1+\alpha)}\leq C_\alpha \left\Vert f\right\Vert_{H_{1/(1+\alpha)}}?
\end{equation*}

\textbf{Open Problem 5:} Let $f\in H_{1/(1+\alpha)},$ where $ 0< \alpha<1.$ Does there exists an absolute constant $C_\alpha,$ such that the following inequality holds 
\begin{equation*}
\left\Vert U^\alpha_{2^n}f\right\Vert_{1/(1+\alpha)}\leq C_\alpha \left\Vert f\right\Vert_{H_{1/(1+\alpha)}}?
\end{equation*}

\textbf{Open Problem 6:} Let $f\in H_{1/(1+\alpha)},$ where $ 0< \alpha<1$ and $t_n$ be  Nörlund means of Walsh-Fourier series with non-increasing and convex sequence $\left\{ q_{k}:k\in \mathbb{N}\right\},$ satisfying the condition \eqref{jig}.
Does there exists an absolute constant $C_\alpha,$ such that the following inequality holds 
\begin{equation*}
\left\Vert t^\alpha_{2^n}f\right\Vert_{1/(1+\alpha)}\leq C_\alpha \left\Vert f\right\Vert_{H_{1/(1+\alpha)}}?
\end{equation*}

\begin{remark}
It is an important relation between Walsh-Fourier series and Wavelet theory, see e.g. \cite{PTWbook} and the papers \cite{FGK} and \cite{FLS}. This is of special interest also for applications as described in the recent PhD thesis of K. Tangrand \cite{Tang}.
\end{remark}

\textbf{ Availability of data and material}

Not applicable.

\textbf{Competing interests}

The authors declare that they have no competing interests.

\textbf{Funding}

The publication charges for this manuscript is supported by the publication fund at UiT The Arctic University of Norway under code IN-1096130. 

\textbf{Authors' contributions}

DB and GT gave the idea and initiated the writing of this paper. LEP and KT followed up this with some complementary ideas. All authors read and approved the final manuscript.

\textbf{Acknowledgements}

The work of George Tephnadze was supported by Shota Rustaveli National
Science Foundation grant FR-19-676. The publication charges for this article have been funded by a grant from the publication fund of UiT The Arctic University of Norway. 
%The authors also would like to thank the two referees for helpful suggestions. 

\textbf{Author details}

$^{1}$The University of Georgia, School of Science and Technology, 77a Merab Kostava St, Tbilisi 0128, Georgia.

$^{2}$Department of Computer Science and Computational Engineering, UiT The Arctic University of Norway, P.O. Box 385, N-8505, Narvik, Norway and Department of Mathematics and Computer Science, Karlstad University, 65188 Karlstad, Sweden.

$^{3}$Department of Computer Science and Computational Engineering, UiT-The Arctic University of Norway, P.O. Box 385, N-8505, Narvik, Norway.

$^{4}$The University of Georgia, School of Science and Technology, 77a Merab Kostava St, Tbilisi 0128, Georgia.

%\textbf{Acknowledgements}
%
%The authors would like to thank the referees for helpful suggestions.


\begin{thebibliography}{99}

\bibitem{BPTW} \textit{L. Baramidze, L. E. Persson, G. Tephnadze and P. Wall,} Sharp $H_p- L_p$ type inequalities of weighted maximal operators of Vilenkin-Nörlund means and its applications, J. Inequal. Appl., 2016, DOI: 10.1186/s13660-016-1182-1. 

\bibitem{BPST} \textit{D. Baramidze, L.-E. Persson, H. Singh and G. Tephnadze,} Some new results and inequalities for subsequences of Nörlund logarithmic means of Walsh-Fourier series, J. Inequal. Appl., 2022, DOI: https://doi.org/10.1186/s13660-022-02765-5.

\bibitem{BN} \textit{I. Blahota, K. Nagy and G. Tephnadze, }  Approximation by $\Theta$-Means of Walsh-Fourier Series,  Anal. Math., 44 (1), 57-71.

\bibitem{BNT} \textit{I. Blahota, K. Nagy and G. Tephnadze, } Approximation by Marcinkiewicz $\Theta$-means of double Walsh-Fourier series, Math. Inequal. Appl., 22 (2019), no. 3, 837--853.

\bibitem{BPT} \textit{I. Blahota, L.-E. Persson and G Tephnadze,} On the Nörlund means of Vilenkin-Fourier series, Czech. Math. J., 65 (4), 983-1002.

\bibitem{fi} \textit{N. J. Fine,} On the Walsh functions, Trans. Amer. Math. Soc., 65 (1949) 372--414.

\bibitem{FGK} \textit{Yu. Farkov, U. Goginava and T. Kopaliani,} Unconditional Convergence of Wavelet Expansion on the Cantor Dyadic Group, Jaen J. Approx., 1, no. 3, 117-133, 2011.

\bibitem{FLS} \textit{Yu. Farkov, E. A. Lebedeva and M. Skopina,} Wavelet frames on Vilenkin groups and their approximation properties, Int. J. Wavelets Multiresolution Inf. Process., 13, no 5, 1-19, 2015.

\bibitem{FMS} \textit{S. Fridli, P. Manchanda and A.H. Siddiqi,} Approximation by Walsh-N\"orlund means, Acta Sci. Math.(Szeged) 74 (2008), no. 3-4, 593-608.

\bibitem{HR} \textit{E. Hewitt and K. A. Ross,} Abstract Harmonic Analysis. Vol. I, Structure of Topological Groups. Integration Theory, Group Representations, Springer, 2013.

\bibitem{Ga2} \textit{G. G\`{a}t and U. Goginava,} Uniform and $L$-convergence
of logarithmic means of Walsh-Fourier series, Acta Math. Sin. 22 (2006), no.
2, 497--506.

\bibitem{gog1} \textit{U. Goginava,} The maximal operator of the $\left(
C,\alpha \right) $ means of the Walsh-Fourier series, Ann. Univ. Sci.
Budapest. Sect. Comput. 26 (2006), 127--135.

\bibitem{gog2} \textit{U. Goginava,} Almost everywhere convergence of subsequence of logarithmic means of Walsh-Fourier series, Acta Math. Paed. Nyíreg., 21 (2005), 169-175.

\bibitem{gog4} \textit{U. Goginava,} The maximal operator of the $(C,\alpha)$ means
of the Walsh-Fourier series, Ann. Univ. Sci. Budapest. Sect. Comput., 26
(2006), 127--135.

\bibitem{LPTT} \textit{D. Lukkassen, L.E. Persson, G. Tephnadze and G. Tutberidze,} Some inequalities related to strong convergence of Riesz logarithmic means of Vilenkin-Fourier series, J. Inequal. Appl., 2020, DOI: https://doi.org/10.1186/s13660-020-02342-8.

\bibitem{Mor} \textit{F. M\'oricz and A. Siddiqi}, Approximation by N\"orlund means
of Walsh-Fourier series, J. Approx. Theory 70 (1992), no. 3, 375--389.

\bibitem{PTW} \textit{N. Memiæ, L. E. Persson and G. Tephnadze,}  A note on the maximal operators of Vilenkin-Nörlund means with non-increasing coefficients, Stud. Sci. Math. Hung., 53, 4, (2016) 545-556.

\bibitem{NT1} \textit{K. Nagy and G. Tephnadze,} On the Walsh-Marcinkiewicz means on the Hardy space, Cent. Eur. J. Math., 12, 8 (2014), 1214-1228.

\bibitem{NT2} \textit{K. Nagy and G. Tephnadze,} Kaczmarz-Marcinkiewicz means and Hardy spaces, Acta math. Hung., 149, 2 (2016), 346-374.

\bibitem{NT3} \textit{K. Nagy and  G. Tephnadze,} Strong convergence theorem for Walsh-Marcinkiewicz means, Math. Inequal. Appl., 19, 1 (2016), 185-195.

\bibitem{NT4} \textit{K. Nagy and G. Tephnadze,} Approximation by Walsh-Marcinkiewicz means on the Hardy space , Kyoto J. Math., 54, 3 (2014), 641-652.

\bibitem{pt} \textit{L. E. Persson and G. Tephnadze,} A sharp boundedness result concerning some maximal operators of Vilenkin-Fejér means, Mediterr. J. Math., 13, 4 (2016) 1841-1853.

\bibitem{PTW1} \textit{L. E. Persson, G. Tephnadze, P. Wall,} On the maximal operators of Vilenkin-Nörlund means, J. Fourier Anal. Appl., 21, 1 (2015), 76-94. 
 
\bibitem{PTW2} \textit{L. E. Persson, G. Tephnadze and P. Wall,} On the Nörlund logarithmic means with respect to Vilenkin system in the martingale Hardy space $H_{1}$, Acta Math. Hung., 154 (2018), no 2, 289-301. 

\bibitem{PTT}  \textit{ L. E. Persson, G. Tephnadze and G. Tutberidze,} On the boundedness of subsequences of Vilenkin-Fejér means on the martingale Hardy spaces, Operators and matrices, 14 (2020), no. 1, 283-294.

\bibitem{PTWbook}  \textit{	L. E. Persson, G. Tephnadze and F. Weisz,}  Martingale Hardy Spaces and Summability of  one-dimenional Vilenkin-Fourier Series, book manuscript, Birkhäuser/Springer,  to appear October 2022.

\bibitem{Pontryagin} \textit{	L. Pontryagin,} Topological Groups, Second Edition, Gordon and Breach, New York, Princeton Univ. Press,  1966.

\bibitem{Rudin} \textit{W. Rudin, } Fourier Analysis on Groups, Wiley Online Library, 1962.

\bibitem{sws} \textit{F. Schipp, W.R. Wade, P. Simon and J. Pál,} Walsh
series, An Introduction to Dyadic Harmonic Analysis, Akadémiai Kiadó,
(Budapest-Adam-Hilger (Bristol-New-York)), 1990.

\bibitem{si11} \textit{P. Simon,} Strong Convergence Theorem for Vilenkin-Fourier Series, J. Math. Anal.  Appl., 245 (2000),  52-68.

\bibitem{sw} \textit{P. Simon and F. Weisz,}  Weak inequalities for Ces\'aro and Riesz summability of Walsh-Fourier series, J. Approx. Theory, 151 (2008), no. 1, 1--19.

\bibitem{Tang} \textit{K. Tangrand,}  Some New Contributions to Neural Networks and Wavelets with Applications, PhD thesis, UiT The Arctic University of Norway, 2022 , to appear.

\bibitem{tep1} \textit{G. Tephnadze,} The maximal operators of logarithmic means of one-dimensional Vilenkin-Fourier series, Acta Math. Acad. Paed. Nyíreg., 27 (2011), 245-256.


\bibitem{tep7} \textit{G. Tephnadze,} On the partial sums of Vilenkin-Fourier series, J. Contemp. Math. Anal. 49 (2014), no. 1, 23-32.

\bibitem{TT1}	\textit{G. Tephnadze and  G. Tutberidze,} A note on the maximal operators of the Nörlund logaritmic means of Vilenkin-Fourier series, 	Trans. A. Razmadze Math. Inst., 174 (2020), no. 1, 1070-112.

\bibitem{Vi} \textit{N. Ya. Vilenkin,} On a class of complete orthonormal systems,
Izv. Akad. Nauk. U.S.S.R., Ser. Mat., 11 (1947), 363-400.

\bibitem{We1} \textit{F. Weisz,} Martingale Hardy Spaces and their
Applications in Fourier Analysis, Springer, Berlin-Heidelberg-New York, 1994.

\bibitem{We3} \textit{F. Weisz,} Hardy spaces and Ces\`{a}ro means of
two-dimensional Fourier series, Bolyai Soc. Math. Studies, (1996), 353-367.

\bibitem{we6} \textit{F. Weisz,}, $\left(C,\alpha \right) $ summability of
Walsh-Fourier series, Anal. Math., 27 (2001), 141-156.








\end{thebibliography}
\end{document}